\newtheorem{thm}{\bf Theorem}
\newtheorem{prop}[thm]{\bf Proposition}
\newtheorem{cor}[thm]{\bf Corollary}
\newtheorem{dfn}[thm]{\bf Definition}
\newtheorem{conj}[thm]{\bf Conjecture}
\title{On the finite geometry of $W(23,16)$}
\date{November 2011}
\author{Assaf Goldberger}
\begin{document}

\begin{abstract}
We study the local geometry of the zero pattern of a weighing matrix
$W(23,16)$. The geometry consists of $23$ lines and $23$ points where each line
contains $7$ points. The incidence rules are that every two lines intersect in
an odd number of points, and the dual statement holds as well. We show that
more than $50\%$ of the pairs of lines must intersect at a single point, and
construct a regular weighted graph out of this geometry. This might indicate that
a weighing matrix $W(23,16)$ does not exist.

\end{abstract}

\maketitle

A \emph{weighing matrix} of size $n$ and weight $k$, generally denoted $W(n,k)$, is an orthogonal $n\times n$ $\{0,1,-1\}$-matrix with rows of length $\sqrt{k}$. Weighing matrices have applications in Chemistry, Spectroscopy, Quantum Computing and Code Theory. The main mathematical interest is to find or prove inexsitence of a $W(n,k)$. For refences about weighing matrices, see for example, \cite{ref1} or \cite{ref2}. To date, the smallest weighing matrix whose existence is unknown (see \cite{ref3}) is $W(23,16)$. In this note we study the underlying finite geometry of a $W(23,16)$ and conjecture that it does not exist.\\

Suppose that $W$ is a $W(23,16)$ matrix. We define a graph structure on the rows of $W$ by assigning an edge connecting the rows $W_i$ and $W_j$ if the zeros of $W_i$ overlap those of $W_j$ at only one position. We will show that there is at least one such edge, and in fact more than 50\% of the pairs  $W_i$ and $W_j$ are connected.\\

We begin by looking at a set $S$ of cardinality $23$, and a collection of subsets (called \emph{lines}) $\mathcal G(W) := \{L_1,L_2,\ldots,L_{23}\}$, such that $L_i$ is the characteristic set of the zeros of $W_i$. Then (i) each $L_i$ has cardinality $7$, and by the orthogonality relations (ii) $|L_i\cap L_j|=1,3,5,7$ for all $i,j$. The fact that $W^T$ is also of type $W(23,16)$ implies the duality statment: By interchanging between lines and points, any statment that we can prove in general from (i) and (ii)  for $\mathcal G(W)$, is also true in $\mathcal G(W^T)$. For example, for any two points in $S$ there is an odd number of lines (i.e. $1,3,5$ or $7$) passing through them. We construct a graph $\Gamma(W)$ on the index set $\{1,2,\ldots,23\}$ by connecting $i$ and $j$ is $|L_i\cap L_j|=1$. We claim
\begin{prop}
$\Gamma(W)$ must contain at least one edge.
\end{prop}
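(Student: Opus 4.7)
The plan is a straightforward double-counting argument whose only ingredient beyond (i) and (ii) is the duality statement. First I would apply duality to property (i): since every line $L_i$ in $\mathcal G(W)$ has cardinality $7$, the dual assertion is that every point $p\in S$ lies on exactly $7$ of the lines $L_1,\ldots,L_{23}$. This is what makes the local geometry around a fixed line rigid enough to force a small intersection.

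Next, I would fix an arbitrary line $L_i$ and count the pairs $(p,j)$ with $j\neq i$ and $p\in L_i\cap L_j$ in two ways. Summing over $p\in L_i$: each such $p$ lies on $7-1=6$ lines besides $L_i$, contributing $|L_i|\cdot 6 = 42$. Summing over $j\neq i$: the count is $\sum_{j\neq i}|L_i\cap L_j|$. Hence
\[
\sum_{j\neq i} |L_i\cap L_j| \;=\; 42.
\]

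The proposition then follows by contradiction: if $\Gamma(W)$ had no edge at $i$, then by (ii) every intersection $|L_i\cap L_j|$ with $j\neq i$ would be an odd number different from $1$, hence at least $3$. But then the left-hand side would be at least $3\cdot 22 = 66 > 42$, a contradiction. So some $j$ must satisfy $|L_i\cap L_j|=1$, yielding an edge.

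I do not foresee a substantive obstacle; the only subtle point is invoking duality correctly to get the ``$7$ lines through every point'' fact. As a bonus, the same identity immediately gives the stronger claim flagged in the abstract: writing $a$ for the number of $j\neq i$ with $|L_i\cap L_j|=1$, one has $a + 3(22-a) \le 42$, so $a\ge 12$, meaning every vertex of $\Gamma(W)$ has degree at least $12$, i.e.\ more than half of the possible $22$ neighbors.
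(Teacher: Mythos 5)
Your argument is correct, but it is not the route the paper takes for this proposition. The paper's proof is a global double count: it counts the correspondence between unordered pairs of points and the lines containing them, getting $\sum_{i<j}\sigma(i,j)=23\binom{7}{2}=483$, and notes that if every pair of points lay on at least $3$ lines the sum would be at least $3\binom{23}{2}=759$; this forces some pair of points to lie on exactly one line, and only then does the paper dualize to obtain two lines meeting in one point. That argument uses only property (i) together with the dual parity condition on point pairs. You instead count locally at a fixed line $L_i$, which requires the dual of (i) itself, namely that every point lies on exactly $7$ lines (equivalently, every column of $W$ has exactly $7$ zeros since $W^T$ is also a $W(23,16)$) --- a legitimate use of duality, and in fact exactly the ingredient ($f_i=6$) the paper deploys later. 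Indeed your computation $\sum_{j\neq i}|L_i\cap L_j|=42$ together with $a+3(22-a)\le 42$ is precisely the paper's proof of Proposition \ref{prop3}, that every vertex of $\Gamma(W)$ has at least $12$ neighbors, which subsumes both this proposition and the $138$-edge bound. So the trade-off is: the paper's Proposition 1 proof needs weaker input and a single clean global count (at the price of an extra duality flip at the end and a weaker conclusion), while your local count spends the dual of (i) up front and immediately buys the stronger, per-vertex statement.
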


\begin{proof}
We consider the correspondence relation between unordered pairs of points  $\{i,j\}$ $i\neq j$ and lines $L_k$ containing them. More precisely, the correspondence is the set $$\mathcal C := \{ (\{i,j\},L_k) \ | i\neq j \text{ and } i,j \in L_k \}$$.
We shall count the elements of $\mathcal C$ in two ways. First we sum over pairs $\{i,j\}$ where $\sigma(i,j)\in \{1,3,5,7\}$ is the number of lines $L_k$ containig this pair. Second, we can sum over lines $L_k$, where for each $L_k$ we count the $\pi_k$ number of pairs $\{i,j\}$ contained in it. Thus we have the identity
$$ \sum_{k=1}^{23} \pi_k = \sum_{1\le i<j \le 23} \sigma(i,j).$$
Now, $\pi_k=\binom{7}{2}$ so the left hand side equals $23\cdot 7\cdot 3$. Suppose now that $\sigma(i,j)>1$ for all $\{i,j\}$. Then in fact $\sigma(i,j)\ge 3$ and we have that
$$ 23\cdot 7\cdot 3 = \sum_{k=1}^{23} \pi_k = \sum_{1\le i<j \le 23} \sigma(i,j) \ge 3 \cdot \binom{23}{2} =23\cdot 11 \cdot 3.$$ This is a contradiction, leading us to the conclusion that $\sigma(i,j)=1$ for at least one pair. By duality, there are two lines $L_i,L_j$ with $|L_i\cap L_j|=1$. This is an edge of the graph, and we are done.

\end{proof}

In fact we can conclude a much stronger statment, namely

\begin{prop}
There are at least $138$ edges in $\Gamma(W)$.
\end{prop}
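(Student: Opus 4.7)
The plan is to sharpen the double counting that proved Proposition~1 so it yields the \emph{exact} value of $\sum_{i<j}|L_i\cap L_j|$, and then convert the arithmetic constraint ``each intersection lies in $\{1,3,5,7\}$'' into an upper bound on the number of \emph{non}-edges of $\Gamma(W)$.

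First I would invoke duality to upgrade property~(i): since $W^T$ is also a $W(23,16)$, the statement ``every line has exactly $7$ points'' dualizes to ``every point lies on exactly $7$ lines.'' Writing $r_p$ for the number of lines through $p \in S$, we thus have $r_p = 7$ for every $p$.

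Next I would count the incidence triples
\[
\mathcal T := \{(p,\{i,j\}) \ | \ i\neq j,\ p \in L_i \cap L_j\}
\]
in two ways. Summing first over points gives $\sum_{p\in S}\binom{r_p}{2} = 23\binom{7}{2} = 483$, while summing first over pairs of lines gives $\sum_{1\le i<j\le 23}|L_i\cap L_j|$. Combining,
\[
\sum_{1\le i<j\le 23}|L_i\cap L_j| \;=\; 483.
\]

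The final step exploits the parity gap in the allowed intersection sizes. Since $|L_i\cap L_j|\in\{1,3,5,7\}$, the quantity $|L_i\cap L_j|-1$ is $0$ when $\{i,j\}$ is an edge of $\Gamma(W)$ and is at least $2$ on every non-edge. Subtracting $\binom{23}{2}=253$ from the identity above gives
\[
\sum_{1\le i<j\le 23}\bigl(|L_i\cap L_j|-1\bigr) \;=\; 230,
\]
so the number of non-edges is at most $230/2 = 115$, and consequently $\Gamma(W)$ has at least $253 - 115 = 138$ edges. I do not foresee a real obstacle: the whole argument is a close variant of the double count already used in Proposition~1, with the ``free factor of $2$'' between consecutive odd intersection sizes doing all the work. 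The only conceptual input beyond Proposition~1 is the dualization of~(i) that pins down $r_p=7$.
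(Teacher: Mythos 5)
Your proof is correct and is essentially the paper's argument run in dual form: the paper counts pairs of points against the lines containing them to get the same total $23\cdot 7\cdot 3=483$ and then bounds the number of pairs met by a single line, transferring to $\Gamma(W)$ by duality, while you count pairs of lines against their common points (using the dualized fact $r_p=7$) and bound the non-edges directly. The key identity and the ``gap of $2$'' between intersection sizes $1$ and $3$ are the same in both versions, so no further comment is needed.
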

\begin{proof}
This follows from the same identity, that $23\cdot 7\cdot 3 = \sum_{1\le i<j \le 23} \sigma(i,j)$. If $e$ is the number of (dual) edges, then $23\cdot 7\cdot 3 \ge e +3(\binom{23}{2}-e)$, which implies our statement.
\end{proof}
Notice that there can be at most $\binom{23}{2}=253$. This means that there is more that $1/2$ probability that two edges will be connected!\\

In fact we can prove more.

\begin{prop} \label{prop3}
Each vertex of the graph has at least 12 neighbors.
\end{prop}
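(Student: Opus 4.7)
The plan is to fix an arbitrary vertex, say the one corresponding to line $L_\ell$, and double-count incidences between the seven points of $L_\ell$ and the other $22$ lines, in the same spirit as the proof of the previous two propositions. The crucial extra input I would use is the dual of property (i): since $W^T$ is also a $W(23,16)$, each row of $W^T$ has $16$ nonzero entries, so each column of $W$ has exactly $7$ zeros. Translated into the geometry, this says that every point of $S$ lies on exactly $7$ lines of $\mathcal G(W)$.

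With this in hand, for each $k\in\{1,3,5,7\}$ let $a_k$ denote the number of indices $j\neq\ell$ with $|L_\ell\cap L_j|=k$. By property (ii), the $a_k$ satisfy
$$a_1+a_3+a_5+a_7=22.$$
Then I would count the set of pairs $(p,j)$ with $p\in L_\ell\cap L_j$ and $j\neq\ell$ in two ways. Summing over $j$ first gives $a_1+3a_3+5a_5+7a_7$. Summing instead over $p\in L_\ell$ first, each of the $7$ points of $L_\ell$ lies on $7-1=6$ other lines, yielding $7\cdot 6=42$. Thus
$$a_1+3a_3+5a_5+7a_7=42.$$

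Subtracting the first identity from the second gives $2a_3+4a_5+6a_7=20$, so $a_3+a_5+a_7\le a_3+2a_5+3a_7=10$, which means that the number of neighbors of $\ell$ in $\Gamma(W)$ is
$$a_1 = 22-(a_3+a_5+a_7) \ge 12,$$
as required. I do not anticipate a real obstacle here; the only non-routine ingredient is recognizing that the dual of (i) supplies ``7 lines through each point,'' after which the argument is the same kind of two-way count already used in Propositions~1 and~2, applied locally at a single line rather than globally.
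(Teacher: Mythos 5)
Your proof is correct and is essentially the paper's own argument: the same double count of incidences between the points of a fixed line and the remaining $22$ lines, using the dual fact that every point lies on exactly $7$ lines (so contributes $6$ to the count). The only cosmetic difference is that you track the exact counts $a_1,a_3,a_5,a_7$ and derive the identity $a_3+2a_5+3a_7=10$, which the paper reaches only later in its proof that $\widetilde\Gamma(W)$ is regular of degree $10$, whereas the paper's Proposition \ref{prop3} uses the cruder inequality $42\ge e+3(22-e)$.
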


\begin{proof}
Consider a vertex, say $L_1$. We consider the correspondence
$$\mathcal C(L_1) := \{ (i,L_j) \ | \ i\in L_1\cap L_j \text{ and } j>1\}.$$
For each $i$ let $f_i$ count the number of set containing $i$ besides $L_1$. For each $j$ let $\psi_j=|L_1\cap L_j|.$ Then
$$ \sum_i f_i = \sum_j \psi_j.$$
Let $e$ be the number of $j$ such that $\psi_j=1$. Since $f_i=6$ for all $i$, then $6\cdot 7 \ge e+3(22-e)$ which implies that $e\ge 12$. Notice that $e$ is the number of neighbors of $L_1$.
\end{proof}

\begin{cor}
The graph $\Gamma(W)$ contains a triangle at any vertex of the graph.
\end{cor}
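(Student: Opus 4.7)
The plan is to leverage Proposition \ref{prop3} twice and then conclude by a simple counting/pigeonhole argument. Fix any vertex $L_1$ of $\Gamma(W)$. By Proposition \ref{prop3}, $L_1$ has at least $12$ neighbors among the other $22$ vertices. Pick one of them, say $L_i$. To produce a triangle through $L_1$, I need to find a third vertex $L_j$ (with $j \neq 1, i$) that is adjacent to both $L_1$ and $L_i$.

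The key observation is that Proposition \ref{prop3} applies to $L_i$ as well, so $L_i$ also has at least $12$ neighbors. I would then look at the set $A$ of $21$ candidate third vertices $\{L_k : k \ne 1, i\}$. Since $L_i$ is itself a neighbor of $L_1$, $L_1$ contributes at most one neighbor outside of $A$, so $L_1$ has at least $11$ neighbors inside $A$. Symmetrically, $L_1$ is a neighbor of $L_i$, so $L_i$ has at least $11$ neighbors in $A$ as well.

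The conclusion is a direct inclusion-exclusion: the two neighbor sets inside $A$ have total size at least $11+11 = 22$, which exceeds $|A| = 21$, hence they must share at least one common element $L_j$. This $L_j$ satisfies $|L_1 \cap L_j| = 1$ and $|L_i \cap L_j| = 1$, so $\{L_1, L_i, L_j\}$ is a triangle of $\Gamma(W)$ through $L_1$.

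I do not anticipate a real obstacle here, since the Corollary essentially follows from Proposition \ref{prop3} by a pigeonhole with slack of only $1$; the only point to be careful about is bookkeeping the vertices $L_1$ and $L_i$ themselves, which must be removed from each other's neighbor lists before applying the size comparison in $A$.
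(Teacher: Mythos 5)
Your proposal is correct and is essentially the paper's argument: both apply Proposition \ref{prop3} to the two endpoints of an edge and then pigeonhole among the remaining $21$ vertices to find a common neighbor. The only difference is cosmetic bookkeeping (the paper fixes twelve neighbors of $v$ and notes $v_1$ cannot avoid them all, while you phrase it as $11+11>21$), and your version is written out a bit more explicitly.
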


\begin{proof}
Take a vertex $v$ and twelve neighbors $v_1,\ldots,v_{12}$. The vertex $v_1$ has $12$ neighbors (at least), so some of them come from $v_2,\ldots,v_{12}$. Hence we get a triangle.
\end{proof}

\begin{cor}
The graph $\Gamma(W)$ has diameter $2$ (at most).
\end{cor}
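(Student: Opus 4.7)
The plan is to combine Proposition \ref{prop3} with a simple counting/inclusion-exclusion argument on neighborhoods. For any two distinct vertices $u,v$, I must show they are either adjacent or have a common neighbor.

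First I would dispose of the adjacent case immediately: if $\{u,v\}$ is an edge of $\Gamma(W)$ then $d(u,v)=1$ and there is nothing to prove. So assume $u$ and $v$ are non-adjacent. Let $N(u)$ and $N(v)$ denote their neighborhoods in $\Gamma(W)$. By Proposition \ref{prop3}, both $|N(u)|$ and $|N(v)|$ are at least $12$.

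The key observation is that $N(u)$ and $N(v)$ live in a small ambient set. Since a vertex is not its own neighbor, $N(u) \subseteq \{1,\ldots,23\}\setminus\{u\}$, and the non-adjacency assumption gives $v \notin N(u)$ and $u \notin N(v)$. Hence both $N(u)$ and $N(v)$ are contained in the $21$-element set $\{1,\ldots,23\}\setminus\{u,v\}$. Inclusion-exclusion then forces
\[ |N(u)\cap N(v)| \;\geq\; |N(u)|+|N(v)|-21 \;\geq\; 12+12-21 \;=\; 3 \,, \]
so $u$ and $v$ share a (in fact at least three) common neighbor $w$, giving the path $u-w-v$ of length $2$.

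There is no real obstacle here once Proposition \ref{prop3} is in hand; the entire argument is the numerical inequality $12+12>21$. The only thing to double-check is the off-by-one bookkeeping ensuring that the two neighborhoods are confined to $21$ vertices rather than $22$, which uses precisely the non-adjacency of $u$ and $v$.
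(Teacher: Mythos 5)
Your argument is correct and is essentially the paper's own proof, which also just observes that the neighborhoods of two non-adjacent vertices (each of size at least $12$ by Proposition \ref{prop3}) cannot avoid each other inside so few vertices. Your version merely makes the pigeonhole count explicit, showing in fact at least $3$ common neighbors.
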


\begin{proof}
Taking the neighbors of any two non-neighboring vertices, there must be a common neighbor.
\end{proof}

We now introduce a weighted graph $\widetilde\Gamma(W)$ which in a sense may serve as the complement graph of $\Gamma(W)$.

\begin{dfn}
The graph $\widetilde \Gamma(W)$ is a weighted graph whose vertex set is the set $\{1,2,\ldots,23\}$ end we connect $i$ and $j$ with an edge of weight $1,2,3$ if $|\ell_i\cap \ell_j|=3,5,7$ respectively.
\end{dfn}

\begin{prop}
$\widetilde \Gamma(W)$ is a regular weighted graph of degree $10$.
\end{prop}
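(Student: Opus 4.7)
The plan is to rewrite the weighted degree in a form where the double-counting identity from the proof of Proposition~\ref{prop3} applies directly. An intersection $|L_i\cap L_j|$ of size $1$ contributes weight $0$ in $\widetilde\Gamma(W)$, and sizes $3,5,7$ contribute weights $1,2,3$, which in every case is $(|L_i\cap L_j|-1)/2$. Hence the weighted degree at vertex $i$ is
$$d(i) \;=\; \sum_{j\neq i}\frac{|L_i\cap L_j|-1}{2} \;=\; \frac{1}{2}\Bigl(\sum_{j\neq i}|L_i\cap L_j|\;-\;22\Bigr),$$
and the whole statement reduces to showing $\sum_{j\neq i}|L_i\cap L_j|=42$ for every $i$.

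First I would invoke the duality that $W^T$ is also a $W(23,16)$ to pin down the dual of (i): every point of $S$ lies in exactly $7$ lines. Then, for fixed $i$, I would double-count the incidences $\{(p,j) : j\neq i,\ p\in L_i\cap L_j\}$ exactly as in Proposition~\ref{prop3}. Summing over $j$ gives $\sum_{j\neq i}|L_i\cap L_j|$, while summing over $p\in L_i$ gives $7\cdot 6=42$, since each of the $7$ points of $L_i$ lies in $6$ lines besides $L_i$. Substituting back gives $d(i)=(42-22)/2=10$, independent of $i$, which is the regularity claim.

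The argument is really just a repackaging of the identity already used. The only ingredient beyond Proposition~\ref{prop3} is the exact value (not merely the parity) of the number of lines through each point, which I see as the only thing to nail down carefully; it comes from the full strength of duality, i.e.\ the dual version of (i), rather than the weaker dual version of (ii) about odd numbers of lines mentioned earlier in the text.
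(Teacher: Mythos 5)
Your proof is correct and is essentially the paper's own argument: the same double count of incidences $\{(p,j): p\in L_i\cap L_j,\ j\neq i\}$ giving $42$ (via the dual fact that each point lies on exactly $7$ lines, so $f_p=6$), combined with the count of the $22$ other lines; the paper just phrases the final step with the variables $n_1+3n_3+5n_5+7n_7=42$ and $n_1+n_3+n_5+n_7=22$ instead of your $(|L_i\cap L_j|-1)/2$ repackaging. Your explicit flagging of the exact value $7$ (not just parity) as the point needing duality is a fair observation, since the paper asserts $f_i=6$ without comment.
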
p

\begin{proof}
The proof follows from examining more carefully the proof of Proposition \ref{prop3}. Again we take a vertex, say $1$, and study the correspondence
$$\mathcal C(L_1) := \{ (i,L_j) \ | \ i\in L_1\cap L_j \text{ and } j>1\}.$$
For each $i$ let $f_i$ count the number of lines containing $i$ except for $L_1$. For each $j$ let $\psi_j=|L_1\cap L_j|.$ Then we have
\begin{equation}\label{eq1} \sum_i f_i = \sum_j \psi_j.\end{equation} Let $n_k$ be the number of lines $L_j$ that intersect $L_1$ with cardinality $k$. Then the right hand side of \eqref{eq1} is rewritten as $\sum_k kn_k = n_1+3n_3+5n_5+7n_7$. The left hand side equals $6\cdot 7=42$ since $f_i=6$ for all $i\in L_1$. it follows that
$$n_1+3n_3+5n_5+7n_7=42.$$
However, $n_1+n_3+n_5+n_7=22$ as there are $23$ lines. Combining these two facts together yields 
$$2n_3+4n_5+6n_7=20 \implies n_3+2n_5+3n_7=10.$$
The proof is finished.
\end{proof}

\begin{conj}
A weighing matrix $W(23,16)$ does not exist.
\end{conj}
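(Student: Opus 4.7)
The plan is to continue the local-to-global programme of the preceding propositions by refining the counting at two levels simultaneously: pushing combinatorial double-counting to triples of lines, and coupling it with the signed orthogonality $WW^T = 16 I$ that has so far been used only through its zero-pattern shadow.

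First I would enumerate the admissible local types at a vertex. The diophantine system $n_1 + n_3 + n_5 + n_7 = 22$, $n_3 + 2n_5 + 3n_7 = 10$ has only finitely many nonnegative integer solutions, yielding a short list of tuples $(n_1,n_3,n_5,n_7)$. Summing over the $23$ vertices gives linear constraints on the global type distribution, and by duality the same constraints must hold when we replace lines by points. This double use of the invariant should already rule out several candidate distributions and pin down the edge-weight sequence of $\widetilde\Gamma(W)$ up to a very short list.

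Second, I would introduce triple intersections $|L_i \cap L_j \cap L_k|$. Writing $WW^T = 16 I$ row by row and applying orthogonality to the three rows $W_i, W_j, W_k$ gives parity and sign constraints on the cardinality of triple overlaps. Double-counting incidences of unordered triples of points against lines that contain them---an analogue of the identity $\sum_k \pi_k = \sum_{i<j}\sigma(i,j)$ at the next combinatorial level---should further tighten the picture and force strong correlations between $\Gamma(W)$ and $\widetilde\Gamma(W)$ at each vertex. I would expect these higher identities to interact with the regularity of $\widetilde\Gamma(W)$ to make the surviving configurations highly symmetric, perhaps even forcing a strongly regular or association-scheme structure on the $23$ lines.

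Finally, one must go beyond the zero pattern. For any pair $L_i, L_j$ with $|L_i \cap L_j| = k$, orthogonality over the $9+k$ common-support positions forces an even split of signed agreements; letting $i, j$ range over all pairs yields a large system of integer linear equations on the $\pm 1$ entries of $W$. The main obstacle I anticipate is that the combinatorial reductions alone will not suffice: one probably needs either a $p$-adic invariant of the Gram matrix $16\, I_{23}$ over $\mathbb{Z}_2$ (the rational equivalence class of the form $16I$ is subtle, and $23$ is prime to $16$), or a representation-theoretic obstruction exploiting the rigidity forced by the regular structure of $\widetilde\Gamma(W)$, in order to eliminate the last surviving configurations without a brute-force computer search. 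Producing such an invariant---rather than an exhaustive case analysis of the admissible weighted graphs---is where I expect the real work to lie.
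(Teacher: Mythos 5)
This statement is a \emph{conjecture} in the paper: the author does not prove it, and offers only heuristic motivation (the point set has only $23$ elements, most pairs of lines meet in a single point, and every triangle of $\Gamma(W)$ forces three lines covering at least $18$ points, which ``seems unlikely'' to pack into $23$ points). So there is no proof in the paper to compare yours against, and your text is likewise not a proof: it is a programme in which every decisive step is deferred. Enumerating the solutions of $n_1+n_3+n_5+n_7=22$, $n_3+2n_5+3n_7=10$, counting triples of points against lines, and exploiting the even split of signed agreements on the $9+k$ common-support positions are all sensible continuations of the paper's double-counting arguments, but you never carry any of them to a contradiction, and you yourself concede that the surviving configurations must be killed by some further invariant ``where the real work lies.'' As it stands the proposal establishes nothing beyond what Propositions 1--6 already give.

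One concrete step in your plan is actually doomed: the suggested $2$-adic or Hasse--Minkowski obstruction attached to the Gram matrix $16\,I_{23}$ is vacuous, because $16$ is a perfect square, so $16\,I_{23}=(4I)^T(4I)$ is rationally (indeed integrally) congruent to the identity form and all local invariants are trivial. This is precisely why the classical number-theoretic nonexistence tests for weighing matrices (which bite when the weight is a non-square in odd order) say nothing here, and why $W(23,16)$ is the smallest case left open in the literature the paper cites. So if a proof of nonexistence is to be found, it must come either from a much deeper combinatorial analysis of the admissible weighted graphs $\widetilde\Gamma(W)$ together with the sign structure, or from exhaustive search; the quadratic-form shortcut you are hoping for cannot distinguish $16\,I_{23}$ from the Gram matrix of a matrix that does exist.
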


We base our conjecture on the fact that (1) The underlying set of the geometry has only $23$ points, (2) Most pairs of lines have a single intersection point, and (3) there are many triples with pairwise intersection of a single point. Each triple covers at   least $18$ points. It seems unlikey that all this can be packed in a small set of size $23$.

\end{document}